\documentclass[11pt]{amsart}

\usepackage[a4paper,margin=1.08in]{geometry}
\usepackage[T1]{fontenc}
\usepackage[utf8]{inputenc}
\usepackage{lmodern}
\usepackage{amsmath,amssymb,amsthm}
\usepackage{booktabs}
\usepackage{listings}
\usepackage{microtype}
\usepackage[hidelinks]{hyperref}
\hypersetup{
  pdftitle={A Finite E-Group of Nilpotency Class Three},
  pdfauthor={Xinan Dai, Wenhao Deng, Yingdong Shi, Tailin Wu, Yuchen Yang}
}

\newtheorem{theorem}{Theorem}[section]
\newtheorem{lemma}[theorem]{Lemma}
\newtheorem{proposition}[theorem]{Proposition}

\theoremstyle{definition}
\newtheorem{definition}[theorem]{Definition}
\theoremstyle{remark}
\newtheorem{remark}[theorem]{Remark}

\newcommand{\F}{\mathbb F}
\newcommand{\End}{\operatorname{End}}
\newcommand{\Aut}{\operatorname{Aut}}
\newcommand{\im}{\operatorname{im}}
\newcommand{\supp}{\operatorname{supp}}

\newcommand{\PG}{\operatorname{PG}}
\newcommand{\clq}{\operatorname{cl}_q}

\title{A Finite E-Group of Nilpotency Class Three}

\author{Xinan Dai}
\thanks{Xinan Dai is currently a Ph.D. student at Fudan University and a visiting
student at the AI for Scientific Simulation and Discovery Lab,
Westlake University.}
\address{College of Future Information and Technology\\
Fudan University, Shanghai, China}
\curraddr{Department of Artificial Intelligence\\
School of Engineering\\
Westlake University, Hangzhou, China}
\email{xndai23@m.fudan.edu.cn}

\author{Wenhao Deng}
\thanks{Wenhao Deng is a student at the University of Glasgow and is currently an intern at the AI for Scientific Simulation and Discovery Lab, Westlake University.}
\address{University of Glasgow, Glasgow, United Kingdom}
\curraddr{Department of Artificial Intelligence\\
School of Engineering\\
Westlake University, Hangzhou, China}
\email{dengwenhao@westlake.edu.cn}

\author{Yingdong Shi}
\address{School of Information Science and Technology, ShanghaiTech University, Shanghai, China}
\email{shiyd2023@shanghaitech.edu.cn}

\author{Tailin Wu}
\address{Department of Artificial Intelligence\\
School of Engineering\\
Westlake University, Hangzhou, China}
\email{wutailin@westlake.edu.cn}

\author{Yuchen Yang}
\address{Department of Artificial Intelligence\\
School of Engineering\\
Westlake University, Hangzhou, China}
\email{yangyuchen@westlake.edu.cn}

\date{}

\begin{document}

\begin{abstract}
A group is an E-group if every element commutes with each of its endomorphic
images.  Caranti asked whether a finite E-group can have nilpotency class
three.  We prove that the $3$-group of order $3^{84}$ introduced by
Abdollahi, Faghihi, and Mohammadi Hassanabadi, and later shown by Abdollahi,
Faghihi, Linton, and O'Brien to have the corresponding automorphism
property, is an E-group.  Let $P$ denote this group and put
$V=P/\Phi(P)\cong\F_3^9$.  The nine power relations of $P$ determine a
linear map
\[
                         q:V\longrightarrow \Lambda^2V.
\]
We prove that $q$ has no nonzero proper subspace $U$ satisfying
$q(U)\subseteq\Lambda^2U$.  Since the image induced by any endomorphism of
$P$ on $V$ has precisely this closure property, every endomorphism acts on
$V$ either invertibly or trivially.  The invertible case is the known
A-group case.  In the trivial case the image first lies in
$\Phi(P)=P'$, and the power relations then force it into
$\Omega_1(P')=Z(P)$.  Thus every element commutes with every endomorphic
image.  The tensor rigidity is reduced to an exact finite calculation on
the $9841$ points of $\PG(8,3)$.
\end{abstract}

\maketitle

\noindent\textit{2020 Mathematics Subject Classification.}
Primary 20D15, 20D45; Secondary 20F45, 17A30.

\noindent\textit{Key words and phrases.}
E-groups, finite $3$-groups, endomorphisms, $2$-Engel groups, exterior
squares, anticommutative algebras.

\section{Introduction}\label{sec:intro}

For a group $G$ and an endomorphism $\varphi\in\End(G)$, write
$a^\varphi=\varphi(a)$.  The group $G$ is an \emph{E-group} if
\begin{equation}\label{eq:E}
                            [a,a^\varphi]=1
\end{equation}
for every $a\in G$ and every $\varphi\in\End(G)$.  Requiring
\eqref{eq:E} only for automorphisms gives an \emph{A-group}.  The passage
from A-groups to E-groups is not formal: singular endomorphisms can have
images that are invisible to the automorphism group, and controlling those
images is the central issue here.

The first nonabelian E-groups were constructed by Faudree
\cite{Faudree71}; see also Malone's early analysis of the resulting
endomorphism dichotomy \cite{Malone77}.  Caranti later gave a systematic
class-two family of finite $p$-groups of exponent $p^2$
\cite{Caranti85}, while Abdollahi, Faghihi, and Mohammadi Hassanabadi
established sharp restrictions on the number of generators and the order of
nonabelian E-groups \cite{AFH08minimum}.  Every A-group is $2$-Engel and
hence nilpotent of class at most three; this elementary but important bound
is recalled in \cite[p.~1]{AFLObrien10}.  Class-two E-groups are therefore
well represented in the literature, whereas the existence of a finite
E-group of class three remained the question posed by Caranti and recorded
as Problem~11.46(a) in the Kourovka Notebook
\cite[Problem~11.46(a)]{Kourovka21}.  We answer this question positively.

\begin{theorem}\label{thm:intro}
There exists a finite E-group of order $3^{84}$ and nilpotency class three.
\end{theorem}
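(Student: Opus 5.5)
We take $P$ to be the $3$-group of order $3^{84}$ constructed by Abdollahi, Faghihi and Mohammadi Hassanabadi. Abdollahi, Faghihi, Linton and O'Brien showed that $P$ is an A-group of class three. It remains to prove that every endomorphism $\varphi$ of $P$ satisfies \eqref{eq:E}. The strategy is a dichotomy on the map $\bar\varphi$ induced on $V=P/\Phi(P)\cong\F_3^9$: either $\bar\varphi$ is invertible or $\bar\varphi=0$.

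\emph{Structural facts.} From the presentation we extract the following.
\begin{enumerate}
\item $\Phi(P)=P'$.
\item $P'/\gamma_3(P)$ is naturally identified with (a quotient of) $\Lambda^2V$ via commutation.
\item The nine generator cubes $x_i^3$ lie in $P'$, and their images define a linear map $q:V\to\Lambda^2V$. Linearity should follow from the $2$-Engel and class-three identities together with $p=3$.
\item $\Omega_1(P')=Z(P)$.
\end{enumerate}

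\emph{The closure property.} Let $U=\im\bar\varphi\le V$. Since $\varphi$ commutes with cubing and commutation, for each $v\in V$ we have $q(\bar\varphi v)=\Lambda^2\bar\varphi\,(q(v))$. The right-hand side lies in $\Lambda^2U$, so $q(U)\subseteq\Lambda^2U$.

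\emph{Tensor rigidity (the main obstacle).} We must show that $q$ has no nonzero proper subspace $U$ with $q(U)\subseteq\Lambda^2U$. Note that $\dim\Lambda^2U=\binom{k}{2}$ is small for small $k=\dim U$, which forces strong conditions. The plan is to reduce this to a finite check on the $9841$ points of $\PG(8,3)$. For a point $\langle u\rangle$, define its closure $\clq(u)$ as the smallest subspace $U$ containing $u$ with $q(U)\subseteq\Lambda^2U$. Any closed $U\ne0$ contains such a closure, since closures of points in $U$ stay inside $U$. It therefore suffices to verify, for every point, that iterating the map $U\mapsto U+\{$the subspace needed so that $q(U)\subseteq\Lambda^2U\}$ reaches $V$. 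The difficulty is that $q(u)\in\Lambda^2U$ is not a linear condition on $U$. To handle it, I would try the following: a nonzero $q(u)\in\Lambda^2U$ forces $U$ to contain the support (the minimal subspace) of the bivector $q(u)$, which is computable as the row space of the corresponding alternating matrix. Iterating "add $u$, then add the supports of $q$ of all vectors of the current span" gives a canonical lower bound for $\clq(u)$. The computation succeeds if this bound equals $V$ for every point. Points with $q(u)=0$, or whose closure chain stabilizes early, would need separate treatment. I expect the group's symmetry to reduce these 9841 points to a few orbit representatives.

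\emph{Finishing.} If $\bar\varphi$ is surjective, it is invertible, so $\varphi$ is an automorphism and the A-group property gives \eqref{eq:E}. If $\bar\varphi=0$, then $P^\varphi\le\Phi(P)=P'$. The image $P^\varphi$ is a quotient of $P$ lying in $P'$, and $P'$ is abelian since $P$ is $2$-Engel of class three with $p=3$. Hence $\varphi$ factors through $P/P'$. Applying $\varphi$ to the power relations $x_i^3\in P'$ shows that the elements $x_i^\varphi$ are killed by $3$ modulo the appropriate term. From this I expect $P^\varphi\le\Omega_1(P')=Z(P)$, and then every element of $P^\varphi$ commutes with all of $P$, giving \eqref{eq:E}. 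Finally, class exactly three is known from the construction, which proves Theorem~\ref{thm:intro}.
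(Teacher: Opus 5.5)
Your outline follows the paper's proof. It uses the same dichotomy on the induced map $L$ on $V=P/\Phi(P)$, and the same naturality $q\circ L=(\Lambda^2L)\circ q$ forcing $\im L$ to be $q$-closed. It uses the same support iteration over the $9841$ points of $\PG(8,3)$, and the same finish through $\Omega_1(P')=Z(P)$. Your remark that $\varphi$ kills $P'$, so $(x^\varphi)^3=(x^3)^\varphi=1$, is equivalent to the paper's step of applying $\varphi$ to the cube relations. What you leave open is the decisive step. The paper actually carries out the exact check and finds $\clq(v)=V$ for every one of the $9841$ points, after at most two support enlargements. Until that is done, the rigidity, and hence the theorem, is only conditional.

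Three side remarks need correcting.

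\emph{No orbit reduction.} The reduction you expect from symmetry does not exist. The paper shows that the linear stabilizer of $q$ is trivial and that its similitude group is $\{\pm I\}$. So the full projective enumeration is already orbit-minimal; it is small enough that this does not matter.

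\emph{The iteration is exact.} Because $\omega\in\Lambda^2U$ if and only if $\supp(\omega)\le U$, the stable value of your iteration is exactly the least $q$-closed subspace containing $v$, not merely a lower bound. So a point with $q(v)=0$, or a chain that stabilizes at a proper subspace, would be a genuine proper closed subspace. It would destroy the dichotomy, and no ``separate treatment'' is possible. The argument works only because no such point occurs: every $q(v)$ has rank $4$, $6$ or $8$, and every closure is $V$.

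\emph{The identification must be exact.} The support criterion needs $P'/\gamma_3(P)\cong\Lambda^2V$ exactly, not a quotient. For a quotient $\Lambda^2V/K$ the closure condition would become $q(U)\subseteq\Lambda^2U+K$, and the support test would no longer apply. Exactness comes from the published orders: $P'\cong C_9^{36}\times C_3^3$ and $\gamma_3(P)\cong C_3^{39}$ give $|P'/\gamma_3(P)|=3^{75-39}=3^{36}$.
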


The witness is not a new group.  Abdollahi, Faghihi, and Mohammadi
Hassanabadi introduced an explicit nine-generator group $P$ in
\cite[Remark~2.1]{AFH08three}, where they recorded its basic structure and
left open whether it is an E-group.  Abdollahi, Faghihi, Linton, and
O'Brien subsequently proved that $P$ is an A-group and determined the
central-series data that we shall use below
\cite[pp.~1--2]{AFLObrien10}.  Thus the remaining problem is sharply
defined: one must understand the noninvertible endomorphisms of this
particular class-three group.

The main point of the proof is that the endomorphism problem has a small
linear shadow.  Since $\Phi(P)=P'$, the Frattini quotient
\[
                              V=P/\Phi(P)
\]
is a nine-dimensional vector space over $\F_3$.  Passing to the appropriate
class-two exponent-$3$ quotient identifies its commutator layer with
$\Lambda^2V$.  The nine cube relations of $P$ then give a linear map
$q:V\to\Lambda^2V$.  If an endomorphism of $P$ induces
$L\in\End_{\F_3}(V)$, functoriality of powers and commutators yields
\begin{equation}\label{eq:intro-nat}
                  q\circ L=(\Lambda^2L)\circ q.
\end{equation}
Consequently the subspace $U=\im L$ satisfies
$q(U)\subseteq\Lambda^2U$.

This observation isolates the exact rigidity statement needed in the group
argument.

\begin{theorem}[Relation-tensor rigidity]\label{thm:intro-rigidity}
For the relation map $q$ attached to $P$, the only subspaces
$U\leq V$ such that $q(U)\subseteq\Lambda^2U$ are $0$ and $V$.
\end{theorem}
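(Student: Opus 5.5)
We reduce the statement to the one-dimensional case plus a closure process, and then do an exhaustive check over the $9841$ points of $\PG(8,3)$.

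\textbf{Closure operator.} For a subspace $U\le V$ let $\clq(U)$ be the smallest subspace $W\supseteq U$ with $q(W)\subseteq\Lambda^2W$. It exists because the condition is closed under intersections, and $V$ itself satisfies it.

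We compute $\clq(U)$ by iteration. For $u\in V$, write $q(u)\in\Lambda^2V$ as an alternating form of some rank. Its \emph{support} $\supp q(u)$ is the smallest subspace $S\le V$ with $q(u)\in\Lambda^2S$; concretely, it is the image of $q(u)$ viewed as a skew map $V^*\to V$.

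The key observation is that $\omega\in\Lambda^2W$ holds exactly when $\supp\omega\subseteq W$. Hence $U$ is $q$-closed if and only if $\supp q(u)\subseteq U$ for all $u\in U$. Starting from $W_0=U$, set
\[
W_{i+1}=W_i+\sum_{u\in W_i}\supp q(u),
\]
and stop when the sequence stabilizes. Since $q$ is linear, it suffices to add $\supp q(w)$ for $w$ in a basis of $W_i$ together with the supports of all linear combinations. Because supports are not additive, this means in practice taking all vectors of $W_i$ up to scalar.

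\textbf{Reduction to points.} Any nonzero closed $U$ contains some point $\langle v\rangle$, and therefore contains $\clq(\langle v\rangle)$. So it suffices to prove
\[
\clq(\langle v\rangle)=V\quad\text{for every } v\ne0.
\]
This is a finite check: one closure computation for each of the $(3^9-1)/2=9841$ points of $\PG(8,3)$.

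\textbf{Setting up the data.} We write $q$ explicitly from the nine cube relations of $P$, in the form recorded in \cite[Remark~2.1]{AFH08three} and \cite{AFLObrien10}. For each basis vector $e_i$ this gives $q(e_i)$ as a $9\times9$ skew matrix over $\F_3$.

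\textbf{Main obstacle and how to handle it.} The hard part is making the computation verifiable rather than a black box. First, we would cut the work down with the symmetry group. Pairs $(L,\Lambda^2L)$ with $L\in GL(V)$ satisfying $q\circ L=(\Lambda^2L)\circ q$ permute closed subspaces. The automorphisms of $P$ known from the A-group analysis induce such $L$, so it suffices to check one point per orbit. Second, we would organize the check by the rank of $q(v)$:
\begin{itemize}
\item If $q(v)$ has rank $\ge 2$, then $\supp q(v)$ already has dimension at least $2$, and one or two iterations typically reach all of $V$.
\item If $q(v)=0$ or $q(v)$ is degenerate, these are the delicate points: the closure of $\langle v\rangle$ could stall at $\langle v\rangle$ itself.
\end{itemize}
For the delicate case, we would first verify that $q$ is injective, so $q(v)\ne0$ for $v\ne0$. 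Then for a rank-$2$ form $q(v)=a\wedge b$ the support is $\langle a,b\rangle$, and one iteration gives $W_1=\langle v,a,b\rangle$. From there we continue explicitly.

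We would present the proof as a structured table: orbit representatives, the chain of dimensions $\dim W_i$, and the witnessing vectors at each step. A short certified program listing would accompany the table so the whole check can be reproduced exactly.
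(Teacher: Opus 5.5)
Your proposal is essentially the paper's proof: the support criterion $\omega\in\Lambda^2W\iff\supp\omega\le W$, the least $q$-closed subspace obtained by iterating supports, and an exhaustive closure computation over the $9841$ points of $\PG(8,3)$. Two of your refinements are unnecessary. First, since $\supp(\omega_1+\omega_2)\le\supp\omega_1+\supp\omega_2$, adjoining supports for a basis of $W_i$ already suffices, so you need not run over all vectors. Second, the orbit reduction buys nothing, because the paper shows the linear stabilizer $\Gamma(q)$ is trivial; and the rank-$0$ and rank-$2$ ``delicate'' cases never occur, since all ranks of $q(v)$ are $4$, $6$, or $8$.
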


The theorem leaves only two possibilities for an endomorphism of $P$.  If
$L$ is onto, Burnside's basis theorem makes the endomorphism surjective and
hence, because $P$ is finite, an automorphism.  This is exactly the case
settled in \cite[Theorem~1.2]{AFLObrien10}.  If $L=0$, then the image lies
in $\Phi(P)=P'$.  Here the nilpotency-class-three feature becomes visible:
$P'=Z_2(P)$ is strictly larger than $Z(P)$, so trivial action on the
Frattini quotient does \emph{not} by itself give central image.  Applying
the endomorphism to all nine cube relations kills their commutator sides,
because $P'$ is abelian, and forces every generator image to have order at
most three.  The published identity
$\Omega_1(P')=Z(P)$ then pushes the whole image into the center.  This
second step is the part that has no analogue to check in the usual special
class-two constructions.

The proof of Theorem~\ref{thm:intro-rigidity} is finite but not merely
experimental.  A bivector $\omega\in\Lambda^2V$ has an intrinsic support,
the image of the contraction map $V^*\to V$.  Starting from a nonzero
$v\in V$ and repeatedly adjoining the supports of $q(u)$ produces the
least $q$-closed subspace containing $v$.  Hence it is enough to test one
representative of each point of $\PG(8,3)$, of which there are
$(3^9-1)/2=9841$.  Exact row reduction over $\F_3$ shows that every one of
these closures is all of $V$.  As a separate check on the input tensor, the
ranks of the associated alternating matrices occur with projective
multiplicities $2,478,9361$ in ranks $4,6,8$, respectively; doubling these
numbers recovers the distribution for the $19682$ nonzero vectors recorded
in \cite[p.~5]{AFLObrien10}.  The closure calculation is stronger than
that rank distribution and is the new finite certificate used here.  The
calculation is reproduced inside the paper: Appendix~\ref{app:verifier}
contains a package-free exact reference verifier whose input is precisely
the nine displayed rows of $q$.  Thus the finite step can be checked from
the paper alone, without ancillary files.

A natural question is whether the $9841$ projective points can instead be
collapsed to a small number of orbits under symmetries of $q$.  For this
particular tensor the answer is no in the natural linear sense.  In
Section~\ref{subsec:stabilizer} we show that its exact linear stabilizer is
trivial and that its linear similitude group is $\{\pm I\}$; hence the
induced projective action is trivial.  The exhaustive projective check is
therefore not concealing an unused large symmetry group.

There is also a useful algebraic way to read the rigidity.  Dualizing $q$
defines an anticommutative multiplication on $V^*$; under orthogonal
complements, $q$-closed subspaces of $V$ correspond exactly to ideals of
this algebra.  Theorem~\ref{thm:intro-rigidity} therefore says that the
resulting nine-dimensional anticommutative algebra is simple.  This
places the calculation near the linear-algebraic methods used by Caranti
\cite[Section~3]{Caranti15} and the group--anticommutative-algebra
correspondences studied by Glasby, Ribeiro, and Schneider
\cite{GlasbyRibeiroSchneider20}; symplectic alternating algebras arising
from $2$-Engel groups provide another neighboring framework
\cite{Traustason08}.  We use these parallels only for interpretation: the
group $P$, its class-three lifting step, and the particular rigidity
statement proved here are distinct.

Section~\ref{sec:group} recalls exactly the published information about
$P$ that enters the proof.  Section~\ref{sec:tensor} constructs the
relation tensor and proves the endomorphism compatibility equation.
Section~\ref{sec:rigidity} gives the closed-subspace criterion, the
finite certificate, and the stabilizer calculation explaining why no
nontrivial projective orbit reduction is available.  Section~\ref{sec:main}
proves Theorem~\ref{thm:intro}; Section~\ref{sec:meaning} records the
dual-algebra interpretation and the precise scope of the argument.
Appendix~\ref{app:verifier} contains the complete reference verifier.

\section{The group and its first exponent-\texorpdfstring{$3$}{3} quotient}\label{sec:group}

We use the commutator convention $[x,y]=x^{-1}y^{-1}xy$.  Let $P$ be the
largest $2$-Engel group of exponent $27$ generated by
$x_1,\ldots,x_9$, subject to
\begin{align}
 x_1^3&=[x_2,x_3][x_4,x_5][x_6,x_7][x_8,x_9],\notag\\
 x_2^3&=[x_1,x_3][x_4,x_6][x_5,x_8][x_7,x_9],\notag\\
 x_3^3&=[x_1,x_2][x_4,x_7][x_5,x_9][x_6,x_8],\notag\\
 x_4^3&=[x_1,x_5][x_2,x_6][x_3,x_9][x_7,x_8],\notag\\
 x_5^3&=[x_1,x_4][x_2,x_8][x_3,x_7][x_6,x_9],\label{eq:relations}\\
 x_6^3&=[x_1,x_7][x_2,x_9][x_3,x_5][x_4,x_8],\notag\\
 x_7^3&=[x_1,x_8][x_4,x_9][x_3,x_6][x_2,x_5],\notag\\
 x_8^3&=[x_1,x_9][x_3,x_4][x_2,x_7][x_5,x_6],\notag\\
 x_9^3&=[x_1,x_6][x_3,x_8][x_2,x_4][x_5,x_7].\notag
\end{align}
Each unordered pair $\{i,j\}$ with $1\le i<j\le9$ occurs exactly once on
the right-hand sides.  The presentation and the following structural data
come from \cite[Remark~2.1]{AFH08three} and the consistent presentation in
\cite[p.~2]{AFLObrien10}.

\begin{proposition}[Published structure]\label{prop:published}
The group $P$ has order $3^{84}$ and nilpotency class three.  With
$P_i(P)$ denoting the lower exponent-$3$ central series,
\begin{align}
 \exp(P/P')&=3, & |P/P_2(P)|&=3^{45},\notag\\
 P'=Z_2(P)&\cong C_9^{36}\times C_3^3,
 & \Omega_1(P')=\gamma_3(P)=Z(P)&\cong C_3^{39}.\label{eq:published}
\end{align}
Moreover $P$ is an A-group.
\end{proposition}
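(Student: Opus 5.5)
Proposition~\ref{prop:published} is a citation of known results, so the plan is to reduce it to the published computations and independently check whatever can be checked cheaply. The order $3^{84}$, class three, and the A-group property are taken from \cite[Remark~2.1]{AFH08three} and \cite[Theorem~1.2, pp.~1--2]{AFLObrien10}. These were obtained by a $p$-quotient computation on the finite presentation: the largest $2$-Engel exponent-$27$ group on the relations~\eqref{eq:relations} is finite, because $2$-Engel groups are nilpotent of class at most three. The only work I would add is to confirm that the presentation displayed here agrees with the one used in those sources. To do this I would check that each unordered pair $\{i,j\}$ occurs exactly once on the right-hand sides, and that the ordering of factors does not matter because $P'$ is abelian.

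Next I would derive the easy parts of \eqref{eq:published} by hand, as a check.
\begin{itemize}
\item \emph{$\exp(P/P')=3$.} Each $x_i^3$ lies in $P'$, and in the abelian group $P/P'$ the cube map is a homomorphism, so $P/P'$ has exponent $3$.
\item \emph{$|P/P_2(P)|=3^{45}$.} Here $P/P_2(P)$ is the largest class-two exponent-$3$ quotient. Its commutator layer is $\Lambda^2V$, of dimension $36$, added to $\dim V=9$, giving $45$. To justify this, I would show that the free class-two exponent-$3$ group on nine generators satisfies the relations modulo $P_2$, since both sides of every relation vanish there.
\item \emph{$P'=Z_2(P)$.} Class three gives $P'\le Z_2(P)$. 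For equality I would use that $Z_2(P)/P'$ would otherwise give a central-modulo-$P'$ element outside $\Phi(P)$. This is not automatic, so I would cite it.
\end{itemize}

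The genuinely computational claims are the isomorphism types $C_9^{36}\times C_3^3$ and $C_3^{39}$, and the equality $\Omega_1(P')=\gamma_3(P)=Z(P)$. For these I would rely on the published nilpotent-quotient data. Orders are at least consistent: $|P|=3^{9}\cdot|P'|$, and $|P'|=3^{72+3}=3^{75}$ gives $3^{84}$. The main obstacle is that these facts, together with the A-group property, rest on machine computation in \cite{AFLObrien10} and cannot reasonably be reproved here. I would therefore state explicitly that the proposition is imported verbatim, and that only the two items $\Phi(P)=P'$ and $\Omega_1(P')=Z(P)$ enter the later argument.
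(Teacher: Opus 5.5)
Importing the proposition by citation is exactly what the paper does; it gives no proof beyond the references. Your checks of $\exp(P/P')=3$ and of the order count $3^9\cdot 3^{75}$ are also fine. The problem is your hand check of $|P/P_2(P)|=3^{45}$, which is wrong in a way that matters for the rest of the paper.

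You call $\overline P=P/P_2(P)$ the largest class-two exponent-$3$ quotient. You then say the free class-two exponent-$3$ group $F$ on nine generators satisfies the relations because both sides vanish there. Only the left sides vanish. In $F$ we have $x_i^3=1$, but each right side is a product of four distinct basic commutators and represents the nonzero bivector $q(e_i)\in\Lambda^2V=F'$. So $F$ does not satisfy the relations. Imposing them kills the nine vectors $q(e_i)$, which are linearly independent because every pair $\{i,j\}$ occurs exactly once. Hence the largest class-two exponent-$3$ quotient of $P$ has order $3^{9+27}=3^{36}$, not $3^{45}$.

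The $P_2(P)$ of the proposition is the second term of the lower exponent-$3$ central series indexed by $P_0=P$, $P_1=\Phi(P)$, $P_2=[P_1,P]P_1^3$. Thus $\overline P$ has exponent-$3$ class two but exponent $9$, and its cube map is precisely $q$. If $\overline P$ had exponent $3$, then $q$ would be zero, every subspace would be $q$-closed, and the rigidity argument would collapse. Reading your phrase as ``exponent-$3$ class two'' does not rescue the argument either. The free group of that kind on nine generators has order $3^{9+9+36}$, and neither side of a relation vanishes in it.

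A correct elementary check goes through exactly that free object. Let $H$ be the $3$-covering group of $C_3^9$. Its Frattini subgroup is central and elementary abelian, with basis the nine $x_i^3$ and the $36$ commutators $[x_i,x_j]$, so $|H|=3^{54}$. Then $\overline P$ is $H$ modulo the nine central elements $x_i^{-3}r_i$, where $r_i$ is the right side of the $i$th relation. These are independent because each involves a different $x_i^3$. Hence $|\overline P|=3^{45}$, and the images of the $[x_i,x_j]$ form a basis of $\overline P'$. This is the identification with $\Lambda^2V$ that the paper relies on.

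Two smaller points. Your sketch for $Z_2(P)\le P'$ is not an argument, since every element of $P$ is central modulo $P'$; citation is genuinely the only route you have there. Also, the later proof uses more than $\Phi(P)=P'$ and $\Omega_1(P')=Z(P)$. It also needs $|P/P_2(P)|=3^{45}$, the abelianness of $P'$, and the A-group property.
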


Since $P$ is a finite $3$-group,
$\Phi(P)=P^3P'$.  The equality $\exp(P/P')=3$ therefore gives
\begin{equation}\label{eq:phi}
                              \Phi(P)=P'.
\end{equation}
In particular,
\[
                       V:=P/\Phi(P)\cong\F_3^9,
\]
with basis $e_i=x_i\Phi(P)$.  The direct-product description in
\eqref{eq:published} also shows that $P'$ is abelian.

Put $\overline P=P/P_2(P)$.  Because $P_2(P)\leq\Phi(P)$,
$\overline P$ still needs nine generators, so
\[
 |\Phi(\overline P)|=3^{45-9}=3^{36}.
\]
The group $\overline P$ has class at most two, its derived subgroup has
exponent three, and its abelianization has exponent three.  Hence
$\Phi(\overline P)=\overline P'$.  The $36$ commutators
$[x_i,x_j]P_2(P)$ with $i<j$ generate $\overline P'$ and have order at
most three; the order calculation above therefore shows that they form an
$\F_3$-basis.  We obtain a canonical identification, relative to the
chosen basis of $V$,
\begin{equation}\label{eq:wedge-id}
 [x_i,x_j]P_2(P)\longleftrightarrow e_i\wedge e_j,
 \qquad 1\le i<j\le9.
\end{equation}
The class-two computation of Abdollahi--Faghihi--Linton--O'Brien also gives
$|Z(\overline P)|=3^{36}$ \cite[Lemma~3.1 and p.~5]{AFLObrien10}.
Since $\overline P'=\Phi(\overline P)$ already has this order and is
central, we shall use
\begin{equation}\label{eq:centerbar}
 Z(\overline P)=\Phi(\overline P)=\overline P'.
\end{equation}

\section{The relation tensor}\label{sec:tensor}

In a class-two group whose commutator subgroup has exponent three,
$(xy)^3=x^3y^3$.  Thus cubing in $\overline P$ descends to an
$\F_3$-linear map
\begin{equation}\label{eq:q}
                         q:V\longrightarrow\Lambda^2V.
\end{equation}
Using \eqref{eq:wedge-id}, the relations \eqref{eq:relations} give
\begin{align*}
q(e_1)&=e_2\wedge e_3+e_4\wedge e_5+e_6\wedge e_7+e_8\wedge e_9,\\
q(e_2)&=e_1\wedge e_3+e_4\wedge e_6+e_5\wedge e_8+e_7\wedge e_9,\\
q(e_3)&=e_1\wedge e_2+e_4\wedge e_7+e_5\wedge e_9+e_6\wedge e_8,\\
q(e_4)&=e_1\wedge e_5+e_2\wedge e_6+e_3\wedge e_9+e_7\wedge e_8,\\
q(e_5)&=e_1\wedge e_4+e_2\wedge e_8+e_3\wedge e_7+e_6\wedge e_9,\\
q(e_6)&=e_1\wedge e_7+e_2\wedge e_9+e_3\wedge e_5+e_4\wedge e_8,\\
q(e_7)&=e_1\wedge e_8+e_4\wedge e_9+e_3\wedge e_6+e_2\wedge e_5,\\
q(e_8)&=e_1\wedge e_9+e_3\wedge e_4+e_2\wedge e_7+e_5\wedge e_6,\\
q(e_9)&=e_1\wedge e_6+e_3\wedge e_8+e_2\wedge e_4+e_5\wedge e_7.
\end{align*}
This is the sole tensor used below.

\begin{lemma}[Naturality]\label{lem:naturality}
Let $\varphi\in\End(P)$ and let $L\in\End_{\F_3}(V)$ be the induced
linear map.  Then
\begin{equation}\label{eq:naturality}
                   q\circ L=(\Lambda^2L)\circ q.
\end{equation}
Consequently
\begin{equation}\label{eq:imageclosed}
                   q(\im L)\subseteq\Lambda^2(\im L).
\end{equation}
\end{lemma}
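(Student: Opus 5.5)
The plan is to push $\varphi$ down to the class-two quotient $\overline P=P/P_2(P)$ and read off \eqref{eq:naturality} from the fact that $\varphi$ respects both cubing and commutators. First, $P_2(P)$ is a verbal subgroup: it is generated by commutators $[a,b]$ and cubes $a^3$ with $a,b\in P'P^3$. Any endomorphism maps commutators to commutators and cubes to cubes, so $\varphi(P_2(P))\le P_2(P)$, and likewise $\varphi(\Phi(P))\le\Phi(P)$. Hence $\varphi$ induces an endomorphism $\bar\varphi$ of $\overline P$. This $\bar\varphi$ preserves $\overline P'=\Phi(\overline P)$, acts on $V=\overline P/\Phi(\overline P)$ as $L$, and restricts to some linear map $M$ on $\overline P'\cong\Lambda^2V$.

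Next I identify $M$ with $\Lambda^2L$. In $\overline P$ the commutator map descends to a bilinear alternating map $V\times V\to\overline P'$, since the class is two and $\Phi(\overline P)=\overline P'$ is central by \eqref{eq:centerbar}. By \eqref{eq:wedge-id} this map is $(u,w)\mapsto u\wedge w$. For $a,b\in\overline P$ we have $\bar\varphi([a,b])=[\bar\varphi(a),\bar\varphi(b)]$, which reads $M(u\wedge w)=Lu\wedge Lw$. Since the decomposable wedges span $\Lambda^2V$, this gives $M=\Lambda^2L$.

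Now apply $\bar\varphi$ to cubes. For $a\in\overline P$ with image $v\in V$, we have $q(v)=a^3$ by the definition \eqref{eq:q}, and $a^3$ depends only on $v$. Then
\[
(\Lambda^2L)(q(v))=\bar\varphi(a^3)=\bar\varphi(a)^3=q(Lv).
\]
Since $\bar\varphi(a)$ lies over $Lv$, this is \eqref{eq:naturality}. The well-definedness of $q$ was already noted in \eqref{eq:q}, via $(xy)^3=x^3y^3$ and the fact that $\Phi(\overline P)$ has exponent $3$, so cubes of elements of $\Phi(\overline P)$ are trivial.

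For \eqref{eq:imageclosed}, take $u=Lv\in\im L$. Then $q(u)=(\Lambda^2L)(q(v))$. Moreover $\Lambda^2L$ maps $\Lambda^2V$ into $\Lambda^2(\im L)$, because it sends $x\wedge y$ to $Lx\wedge Ly$.

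The only point needing care is the invariance of $P_2(P)$ under arbitrary (possibly non-surjective) endomorphisms. The verbal description handles this, because each term of the lower exponent-$3$ central series is defined by words in the previous term, and the previous term is invariant by induction.
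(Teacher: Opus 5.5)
Your proposal is correct and takes the paper's route: full invariance of $P_2(P)$, then computing $\bar\varphi(a^3)$ in two ways, and you also spell out the identification of the induced map on $\overline P'$ with $\Lambda^2L$, which the paper leaves implicit. There is one harmless slip: $P_2(P)=[\Phi(P),P]\,\Phi(P)^3$, so a generating commutator $[a,b]$ needs only $a\in P'P^3$ and $b\in P$ (with both entries in $P'P^3$ you would get only $(P')^3$ here, since $P'$ is abelian), but full invariance is unaffected and your closing inductive remark states it correctly.
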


\begin{proof}
The subgroup $P_2(P)$ is fully invariant, so $\varphi$ induces an
endomorphism of $\overline P$.  Let $v\in V$ and choose a lift $y\in P$.
Modulo $P_2(P)$ the cube of $y$ represents $q(v)$.  Applying $\varphi$
and using $[a,b]^\varphi=[a^\varphi,b^\varphi]$ gives two descriptions
of the same cube: $q(Lv)$ and $(\Lambda^2L)q(v)$.  This proves
\eqref{eq:naturality}.  Its right-hand side belongs to
$\Lambda^2(\im L)$, which gives \eqref{eq:imageclosed}.
\end{proof}

\begin{definition}\label{def:closed}
A subspace $U\leq V$ is \emph{$q$-closed} if
$q(U)\subseteq\Lambda^2U$.
\end{definition}

The point of Definition~\ref{def:closed} is not to classify all
endomorphisms: Lemma~\ref{lem:naturality} only says that every induced
endomorphism image is $q$-closed.  This one-way implication is exactly
what is needed to rule out singular actions on the Frattini quotient.

\section{Closed subspaces and rigidity}\label{sec:rigidity}

For $\omega\in\Lambda^2V$, contraction identifies $\omega$ with an
alternating map $V^*\to V$.  Define
\[
                    \supp(\omega)=\im(V^*\xrightarrow{\ \omega\ }V).
\]
Equivalently, after choosing the basis $e_1,\ldots,e_9$,
$\supp(\omega)$ is the column space of the skew matrix representing
$\omega$.

\begin{lemma}[Support criterion]\label{lem:support}
For $U\leq V$ and $\omega\in\Lambda^2V$,
\[
                \omega\in\Lambda^2U
       \quad\Longleftrightarrow\quad
                \supp(\omega)\leq U.
\]
\end{lemma}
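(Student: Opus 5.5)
The plan is to pick a basis of $V$ adapted to $U$ and read both conditions off the coordinates of $\omega$; both directions then reduce to linear algebra. First I will make the contraction explicit. The bivector $e_i\wedge e_j$ acts on $V^*$ by $f\mapsto f(e_i)e_j-f(e_j)e_i$. The sign convention is irrelevant, since it does not change the image. By linearity, $\omega=\sum_{i<j}a_{ij}\,e_i\wedge e_j$ sends $f$ to $\sum_{i<j}a_{ij}\bigl(f(e_i)e_j-f(e_j)e_i\bigr)$. Both sides of the claimed equivalence are basis-independent: $\Lambda^2U\subseteq\Lambda^2V$ is intrinsic, and so is the image of the contraction. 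We may therefore choose a basis $u_1,\dots,u_k,u_{k+1},\dots,u_9$ of $V$ whose first $k$ vectors span $U$, and expand $\omega$ in the corresponding basis $u_i\wedge u_j$.

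For the forward direction, assume $\omega\in\Lambda^2U$. Then $\omega$ is a combination of $u_i\wedge u_j$ with $i,j\le k$. By the formula above, each such term maps every $f\in V^*$ into $\langle u_i,u_j\rangle\le U$. Hence $\supp(\omega)\le U$.

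For the converse, assume $\supp(\omega)\le U$. Write $\omega=\omega_{UU}+\omega_{UW}+\omega_{WW}$, where $W=\langle u_{k+1},\dots,u_9\rangle$. Here the three pieces collect the terms $u_i\wedge u_j$ with both indices $\le k$, with exactly one index $\le k$, and with both indices $>k$, respectively. Let $u^1,\dots,u^9$ be the dual basis. I will evaluate the contraction on $f=u^i$ and project to $W$ along $U$.

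First take $i\le k$. Only terms involving $u_i$ contribute. Their $W$-components are $\pm a_{ij}u_j$ for $j>k$, and these come from $\omega_{UW}$. Since the image lies in $U$, the projection to $W$ vanishes, so all coefficients of $\omega_{UW}$ are zero.

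Next take $i>k$. Now the $W$-component of the contraction is $\sum_{j>k}\pm a_{ij}u_j$, coming from $\omega_{WW}$. Vanishing of this component forces every coefficient of $\omega_{WW}$ to be zero. Therefore $\omega=\omega_{UU}\in\Lambda^2U$.

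Equivalently, in matrix language: the skew matrix of $\omega$ in the adapted basis has column space inside $U$ exactly when its bottom $9-k$ rows vanish. By skew-symmetry, the corresponding columns then vanish as well. This leaves only the top-left $k\times k$ block, which is precisely the statement $\omega\in\Lambda^2U$.

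There is no real obstacle. The only point needing care is that in characteristic $3$ there is no need to divide by $2$: I am using the basis $\{u_i\wedge u_j\}_{i<j}$ and skew matrices with zero diagonal, so no hidden factors arise.
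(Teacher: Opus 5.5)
Your proof is correct and follows essentially the same route as the paper: an adapted basis, the observation that $\omega\in\Lambda^2U$ means the skew matrix is supported in the $U\times U$ block, and for the converse that image in $U$ kills the rows outside $U$, after which skew-symmetry kills the corresponding columns. Your dual-basis evaluation with the splitting $\omega_{UU}+\omega_{UW}+\omega_{WW}$ just spells out that matrix argument in more detail, and you state the matrix version explicitly at the end.
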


\begin{proof}
Choose a basis of $U$ and extend it to one of $V$.  If
$\omega\in\Lambda^2U$, its skew matrix has nonzero entries only in the
$U\times U$ block, so its image lies in $U$.  Conversely, if its image
lies in $U$, all rows outside the $U$-block vanish; skew-symmetry forces
the corresponding columns to vanish as well.  Thus no exterior coordinate
of $\omega$ involves a basis vector outside $U$, which is precisely
$\omega\in\Lambda^2U$.
\end{proof}

Starting from $0\neq v\in V$, define $U_0(v)=\langle v\rangle$.  Given
$U_r(v)$, choose a basis $B_r$ and set
\begin{equation}\label{eq:closure}
 U_{r+1}(v)=U_r(v)+\sum_{u\in B_r}\supp(q(u)).
\end{equation}
The process stabilizes because $\dim V=9$.

\begin{lemma}[Least closed subspace]\label{lem:closure}
The stable value $\clq(v)$ of \eqref{eq:closure} is independent of the
chosen bases and is the least $q$-closed subspace containing $v$.
\end{lemma}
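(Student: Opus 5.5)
The plan is to show two things: every $q$-closed subspace containing $v$ contains each $U_r(v)$, and the stable value is itself $q$-closed. Independence of bases then follows, because the stable value equals the least $q$-closed subspace containing $v$, and that object is defined intrinsically.

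First I will check that $q$-closed subspaces are stable under intersection. If $U,W$ are $q$-closed and $u\in U\cap W$, then $q(u)\in\Lambda^2U\cap\Lambda^2W$. Lemma~\ref{lem:support} gives $\supp(q(u))\leq U$ and $\supp(q(u))\leq W$. Hence $\supp(q(u))\leq U\cap W$, and the same lemma returns $q(u)\in\Lambda^2(U\cap W)$. Since $V$ is $q$-closed, the least $q$-closed subspace $C(v)$ containing $v$ exists: it is the intersection of all such subspaces.

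Next, by induction on $r$, I will show $U_r(v)\leq W$ for every $q$-closed $W\ni v$, whatever bases $B_r$ are chosen. The case $r=0$ is immediate. If $U_r(v)\leq W$, then each $u\in B_r$ lies in $W$, so $q(u)\in\Lambda^2W$. Lemma~\ref{lem:support} gives $\supp(q(u))\leq W$, and therefore $U_{r+1}(v)\leq W$. Thus the stable value $U_\infty$ satisfies $U_\infty\leq C(v)$.

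For the reverse inclusion, let $U_\infty=U_s=U_{s+1}$. Then $\supp(q(u))\leq U_\infty$ for every $u$ in the basis $B_s$ of $U_\infty$. The key observation needed is that supports are subadditive: $\supp(\omega+\omega')\leq\supp(\omega)+\supp(\omega')$ and $\supp(c\,\omega)\leq\supp(\omega)$. Both hold because $\omega\mapsto(V^*\to V)$ is linear. By linearity of $q$, every $u\in U_\infty$ therefore has $\supp(q(u))\leq U_\infty$. By Lemma~\ref{lem:support}, $q(u)\in\Lambda^2U_\infty$, so $U_\infty$ is $q$-closed. Since it contains $v$, we get $C(v)\leq U_\infty$, and hence equality.

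The only mildly delicate point is this last one: the construction tests closure only on a basis, and subadditivity of $\supp$ combined with linearity of $q$ is what extends it to all of $U_\infty$. No computation is required. Stabilization itself holds because the $U_r$ form an increasing chain in a $9$-dimensional space.
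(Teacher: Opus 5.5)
Your proof is correct and follows the paper's route. The support criterion gives $U_r(v)\le W$ by induction for every $q$-closed $W\ni v$, and linearity of $q$ together with subadditivity of $\supp$ makes the fixed point $q$-closed. The differences are minor: your intersection argument for the existence of $C(v)$ is correct but unnecessary, since the two inclusions already show $U_\infty$ is least. Also, where you infer basis independence of the stable value from this intrinsic characterization, the paper uses subadditivity to show each $U_{r+1}(v)$ depends only on $U_r(v)$; this makes the whole chain basis-independent, and hence also the growth profiles in \eqref{eq:growth-profile}.
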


\begin{proof}
If $u$ is a linear combination of the vectors in $B_r$, then $q(u)$ is
the same linear combination of their images, and the support of a sum of
bivectors is contained in the sum of the individual supports.  Hence
adjoining the supports for a basis is equivalent to adjoining them for
all vectors in $U_r(v)$; this proves basis independence.

At a fixed point, Lemma~\ref{lem:support} gives
$q(U_r(v))\subseteq\Lambda^2U_r(v)$, so the stable subspace is $q$-closed.
If $W$ is any $q$-closed subspace containing $v$, then
Lemma~\ref{lem:support} shows inductively that $U_r(v)\leq W$ for every
$r$.  Therefore the stable value is the least such subspace.
\end{proof}

\subsection{The exhaustive finite certificate}\label{subsec:certificate}

Scalar multiples have the same closure, so only projective directions
must be checked.  Normalize each nonzero vector by making its first
nonzero coordinate equal to $1$.  The number of normalized vectors is
\begin{equation}\label{eq:pgcount}
                       |\PG(8,3)|=\frac{3^9-1}{3-1}=9841.
\end{equation}
For each normalized $v$, form the alternating matrix $Q(v)$ representing
$q(v)$ and iterate \eqref{eq:closure}, using exact Gaussian elimination
over $\F_3$ for all spans and supports.  The exhaustive calculation gives:

\begin{table}[ht]
\centering
\caption{Exact projective verification for the relation tensor.}
\label{tab:verification}
\begin{tabular}{@{}lrrr@{}}
\toprule
 & rank $4$ & rank $6$ & rank $8$ \\
\midrule
Projective points $[v]$ & 2 & 478 & 9361 \\
Nonzero vectors $v$     & 4 & 956 & 18722 \\
\bottomrule
\end{tabular}

\medskip
\begin{tabular}{@{}lr@{}}
\toprule
Closure statistic & count \\
\midrule
$\dim\clq(v)=9$ & 9841 \\
$\dim\clq(v)<9$ & 0 \\
\bottomrule
\end{tabular}
\end{table}

A finer audit trail records the complete dimension growth of the closure.
Writing, for example, $(1,8,9)$ when the successive dimensions are
$1,8,9$, the exact profile is
\begin{equation}\label{eq:growth-profile}
\begin{array}{c|rrrrrr}
\text{profile}&(1,9)&(1,8,9)&(1,7,9)&(1,6,9)&(1,5,9)&(1,4,9)\\
\hline
\text{count}&6138&3223&426&52&1&1.
\end{array}
\end{equation}
In particular every projective point reaches $V$ after at most two support
enlargements.  The two rank-four directions are represented by
\begin{equation}\label{eq:rank4-points}
 (0,1,1,1,1,1,1,1,1),\qquad
 (1,1,1,1,1,1,1,0,1),
\end{equation}
and have growth profiles $(1,4,9)$ and $(1,5,9)$, respectively.

The first line of Table~\ref{tab:verification} doubles to the published
rank distribution for all nonzero vectors in the corresponding
nine-dimensional relation space \cite[p.~5]{AFLObrien10}.  This is an
independent check that the tensor has been transcribed correctly.  The
closure data are a different computation: they record the iterative
support condition of Lemma~\ref{lem:closure}, not merely the rank of
$q(v)$.

\begin{theorem}[Tensor rigidity]\label{thm:rigidity}
The only $q$-closed subspaces of $V$ are $0$ and $V$.
\end{theorem}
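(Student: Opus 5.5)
The proof is a direct reduction to the exhaustive certificate of Section~\ref{subsec:certificate}. The argument runs as follows.

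Let $U\leq V$ be a nonzero $q$-closed subspace and choose $0\neq v\in U$. By Lemma~\ref{lem:closure}, $\clq(v)$ is the least $q$-closed subspace containing $v$, so $\clq(v)\leq U$.

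Next, $\clq(v)$ depends only on the projective point $[v]$. This holds because $q(\lambda u)=\lambda q(u)$ and $\supp(\lambda\omega)=\supp(\omega)$ for $\lambda\in\F_3^\times$, so the iteration \eqref{eq:closure} started at $\langle v\rangle=\langle\lambda v\rangle$ is literally the same. We may therefore assume $v$ is normalized, with its first nonzero coordinate equal to $1$. It is then one of the $9841$ representatives counted in \eqref{eq:pgcount}.

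For each such representative, Table~\ref{tab:verification} records $\dim\clq(v)=9$. Hence $V=\clq(v)\leq U$, so $U=V$. Conversely, $0$ and $V$ are trivially $q$-closed, which proves the theorem.

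The only substantive step is the finite computation. What must be checked is that it genuinely computes $\clq(v)$ as in Lemma~\ref{lem:closure}:
\begin{itemize}
\item the skew matrix $Q(u)$ is read off correctly from the nine displayed rows, by linearity in $u$;
\item supports are computed as column spaces, which is valid by Lemma~\ref{lem:support};
\item the iteration is run to stabilization, using any basis of $U_r(v)$, which is legitimate by the basis-independence part of Lemma~\ref{lem:closure}.
\end{itemize}
The growth profile \eqref{eq:growth-profile} and the rank distribution, which doubles to the published one, serve as consistency checks on the transcription of $q$. The reference verifier in Appendix~\ref{app:verifier} makes the computation reproducible.

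A hand argument could cut the work down. For any rank-$8$ direction, $\supp(q(v))$ is already $8$-dimensional, so it remains only to show that $v\notin\supp(q(v))$ or that one further support escapes that hyperplane. Even so, $9361$ such points still require checking without symmetry, since the stabilizer of $q$ is trivial. I would therefore rely on the exhaustive certificate rather than attempt an orbit reduction.
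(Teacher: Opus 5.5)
Your proposal is correct and follows the paper's proof essentially verbatim: you pick $0\neq v\in U$, use Lemma~\ref{lem:closure} to get $\clq(v)\leq U$, and invoke the exhaustive projective certificate of Table~\ref{tab:verification} to conclude $\clq(v)=V$. Your explicit check that $\clq(v)$ depends only on $[v]$ and your side remark on the rank-$8$ directions (which split as $6138+3223$ in the growth profile according to whether $v\in\supp(q(v))$) are consistent with the paper.
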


\begin{proof}
Let $U$ be nonzero and $q$-closed.  Choose $0\neq v\in U$.
Lemma~\ref{lem:closure} gives $\clq(v)\leq U$.  By
\eqref{eq:pgcount} and the exhaustive calculation in
Table~\ref{tab:verification}, $\clq(v)=V$ for every nonzero projective
direction.  Hence $U=V$.  The zero subspace is trivially $q$-closed.
\end{proof}

\begin{remark}[What the computation certifies]\label{rem:certificate}
The finite step is exhaustive, not probabilistic: Lemma~\ref{lem:closure}
reduces every nonzero $q$-closed subspace to the closure of one projective
point, and \eqref{eq:pgcount} lists all such points.  A direct exact check
of the $36$ commutator pairs gives $36/36$ distinct pairs.  The reference
verifier in Appendix~\ref{app:verifier} reproduces
Table~\ref{tab:verification} and \eqref{eq:growth-profile} using only
integer arithmetic modulo three.  As a negative control, deleting the
first row $q(e_1)$ leaves $\langle e_1\rangle$ closed, so the same
procedure correctly detects a deliberately introduced proper closed
subspace.
\end{remark}

\subsection{The stabilizer and the orbit question}\label{subsec:stabilizer}

There is an obvious possible shortcut to an exhaustive projective
verification: compute a symmetry group of $q$ and check one point in each
orbit.  In the present example the natural symmetry group is too small for
this strategy.

\begin{proposition}[Trivial projective tensor symmetry]\label{prop:stabilizer}
Set
\[
 \Gamma(q)=\{g\in\operatorname{GL}(V):qg=(\Lambda^2g)q\}
\]
and let
\[
 \Gamma_{\mathrm{sim}}(q)=
 \{g\in\operatorname{GL}(V):qg=\lambda(\Lambda^2g)q
       \text{ for some }\lambda\in\F_3^\times\}.
\]
Then
\[
                 \Gamma(q)=\{I\},\qquad
                 \Gamma_{\mathrm{sim}}(q)=\{I,-I\}.
\]
Consequently the induced action of the natural linear similitude group on
$\PG(V)=\PG(8,3)$ is trivial.
\end{proposition}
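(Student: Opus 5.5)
We aim to show that any $g\in\Gamma_{\mathrm{sim}}(q)$ is $\pm I$. Two easy facts come first. First, if $qg=\lambda(\Lambda^2g)q$ and $\lambda=-1$, replace $g$ by $-g$. Since $\Lambda^2(-g)=\Lambda^2g$ and $q(-g)=-qg$, this turns $\lambda=-1$ into $\lambda=1$. Hence $\Gamma_{\mathrm{sim}}(q)=\Gamma(q)\cup(-\Gamma(q))$. Second, $-I\notin\Gamma(q)$, because $q(-v)=-q(v)$ while $\Lambda^2(-I)=I$ and $q\neq0$. So $-I$ is a similitude with $\lambda=-1$. Everything therefore reduces to proving $\Gamma(q)=\{I\}$. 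Once this holds, the projective action is trivial, since $\pm I$ fix every point of $\PG(8,3)$.

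To prove $\Gamma(q)=\{I\}$ we use invariants that any $g\in\Gamma(q)$ must preserve. The relation $q(gv)=(\Lambda^2g)q(v)$ gives $\operatorname{rank}Q(gv)=\operatorname{rank}Q(v)$, and $\Lambda^2 g$ carries supports to supports, so $\supp q(gv)=g\,\supp q(v)$. By Table~\ref{tab:verification} there are exactly two rank-four projective points, the ones in \eqref{eq:rank4-points}. Their closure growth profiles $(1,4,9)$ and $(1,5,9)$ differ, and closure profiles are $g$-invariant. Hence $g$ fixes each of these two projective points: $gv_1=\alpha v_1$ and $gv_2=\beta v_2$ with $\alpha,\beta=\pm1$. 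It also preserves the $4$-dimensional space $\supp q(v_1)$, the $4$-dimensional space $\supp q(v_2)$, and the iterated supports inside them.

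Next we propagate this rigidity. The support subspaces, together with $q$ applied to them, give more distinguished vectors, for instance $q$-images of basis vectors of $\supp q(v_1)$ and intersections of supports. These yield an $\F_3$-linear system in the $81$ unknown entries of $g$. Concretely, $g$ must map the $4$-space $S_1=\supp q(v_1)$ into itself, and it must satisfy $q(gu)=(\Lambda^2 g)q(u)$ for $u\in S_1$. This is a quadratic system, but it is small and can be solved exactly by enumerating $\mathrm{GL}(S_1)$ together with the signs $\alpha$. The enumeration has order $|\mathrm{GL}_4(3)|\approx 2.4\times10^7$ at worst, and in practice far less once the $(1,4,9)$ growth structure and the distinguished vector $v_1$ are imposed. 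We then extend $g|_{S_1}$ to $V$. Since $\clq(v_1)=V$ after one more step, $V=S_1+\sum_{u\in S_1}\supp q(u)$, and the equivariance equations determine $g$ on each $\supp q(u)$ from $\Lambda^2(g|)$. Concretely, $g$ on $V$ is solved linearly once $g|_{S_1}$ is fixed, because $(\Lambda^2 g)q(u)=q(gu)$ is linear in the remaining unknowns after fixing part of $g$. Each candidate is checked for consistency, and we expect only $g=I$ to survive, the choice $\alpha=-1$ being excluded because it forces $\lambda=-1$.

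The main obstacle is making this last step an honest certificate rather than a heuristic. We would present it as an exact finite search, in the same spirit as Appendix~\ref{app:verifier}. Alternatively, and this is what we would try first, we would use the full equivariance as a bilinear system and linearize it. Write $g=I+h$ and show first that $g$ must be unipotent or scalar on the invariant flag built from the supports. Then solve $q\circ h=\Lambda^2$-terms degree by degree, which reduces to the kernel of the linear map $h\mapsto qh-(h\wedge 1+1\wedge h)q$. If that derivation space is zero, the rigidity argument combined with the fixed rank-four points should force $h=0$.
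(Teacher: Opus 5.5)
Your reduction of $\Gamma_{\mathrm{sim}}(q)$ to $\Gamma(q)$ is correct, and it is exactly the paper's last step: when $\lambda=-1$ you replace $g$ by $-g$. Your invariance observations are also sound. Since $\supp q(gv)=g\,\supp q(v)$, any $g$ preserves ranks and closure profiles, fixes the two rank-four points, and stabilizes $S_1=\supp q(v_1)$.

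The gap is that the heart of the proposition, $\Gamma(q)=\{I\}$, is never proved. You describe a search and say you ``expect'' $g=I$ to be the only survivor, and the search itself does not work as stated:
\begin{itemize}
\item The equations $q(gu)=(\Lambda^2g)q(u)$ for $u\in S_1$ cannot be checked on $g|_{S_1}$ alone, because $S_1$ is not $q$-closed.
\item Once $g|_{S_1}$ is fixed, the remaining equations are \emph{not} linear in the other unknowns. For example, $u=e_1+e_2+e_3\in S_1$ has $q(u)\notin S_1\wedge V$; pairing with $x_6-x_7$ and $x_8-x_9$ gives $2$. So $(\Lambda^2g)q(u)$ contains terms $gt\wedge gt'$ with $t,t'$ outside $S_1$.
\item Excluding $\alpha=-1$ is unjustified. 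The condition $gv_1=-v_1$ only requires $\Lambda^2g$ to act by $-1$ on $q(v_1)\in\Lambda^2S_1$, and a symplectic similitude of $S_1$ with multiplier $-1$ does this. It says nothing about $\lambda$.
\item Your fallback fails more basically. The kernel of $h\mapsto qh-(h\wedge1+1\wedge h)q$ is only the Lie algebra of the stabilizer. Its vanishing means the stabilizer has no infinitesimal part, which is compatible with a nontrivial finite group $\Gamma(q)$ of $\F_3$-points.
\item In the same fallback, the quadratic term $\Lambda^2h$ cannot be dropped over $\F_3$, and the claimed unipotence of $g$ has no support.
\end{itemize}

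The missing idea, which the paper uses, is group-theoretic and needs no search. The data $(V,\Lambda^2V,q)$ give an exact presentation of the class-two quotient $\overline P=P/P_2(P)$. Take generators $x_i$, central $c_{ij}$ of order three, relations $[x_i,x_j]=c_{ij}$, and the cube relations encoded by $q$. Elements have normal form $x_1^{a_1}\cdots x_9^{a_9}\prod c_{ij}^{b_{ij}}$, so this group has order at most $3^{45}=|\overline P|$. Hence any $g$ with $qg=(\Lambda^2g)q$ respects the defining relations and lifts to an endomorphism of $\overline P$. By Burnside's basis theorem the lift is an automorphism. Abdollahi--Faghihi--Linton--O'Brien proved $\Aut(\overline P)=\Aut_c(\overline P)$. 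Since $Z(\overline P)=\Phi(\overline P)$, every central automorphism induces the identity on $V$, so $g=I$. If you want a self-contained computational proof instead, you must actually run and certify an exhaustive search with a correct, nonlinear extension step, not predict its outcome.
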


\begin{proof}
Let $g\in\Gamma(q)$.  The class-two group $\overline P$ has Frattini
quotient $V$, commutator layer $\Lambda^2V$ via
\eqref{eq:wedge-id}, and power map $q$.  These data give a full class-two presentation of $\overline P$.  Indeed,
introduce central symbols $c_{ij}$ of order three for $1\le i<j\le9$,
impose $[x_i,x_j]=c_{ij}$ and the nine cube relations encoded by $q$, and
kill all triple commutators.  Every element of the resulting group has a
normal form
\[
 x_1^{a_1}\cdots x_9^{a_9}
 \prod_{i<j}c_{ij}^{b_{ij}},\qquad
 0\le a_i,b_{ij}<3,
\]
so its order is at most $3^{9+36}$.  It maps onto $\overline P$, which
has exactly that order; hence the presentation is exact.  The assignments
on $V$ and $\Lambda^2V$ given by $g$ and $\Lambda^2g$ therefore preserve
the defining relations exactly when $qg=(\Lambda^2g)q$.  Thus $g$ lifts
to an endomorphism of $\overline P$.  Since $g$ is invertible on the
Frattini quotient, Burnside's basis theorem makes this lift an
automorphism.

Abdollahi--Faghihi--Linton--O'Brien proved
$\Aut(\overline P)=\Aut_c(\overline P)$
\cite[Lemma~3.1, p.~5]{AFLObrien10}.  By \eqref{eq:centerbar}, a central
automorphism of $\overline P$ acts trivially on
$\overline P/\Phi(\overline P)=V$.  Therefore $g=I$, proving
$\Gamma(q)=\{I\}$.

Now suppose $g\in\Gamma_{\mathrm{sim}}(q)$, so
$qg=\lambda(\Lambda^2g)q$ with
$\lambda\in\F_3^\times=\{1,-1\}$.  Put $h=\lambda g$.  Since
$\lambda^2=1$ and $q$ is linear,
\[
 qh=\lambda qg=\lambda^2(\Lambda^2g)q
    =(\Lambda^2h)q.
\]
Thus $h\in\Gamma(q)$, so $h=I$ and $g=\lambda I$.  Conversely $I$
and $-I$ plainly satisfy the corresponding similitude identities.  Both
scalars induce the identity on projective space.
\end{proof}

\begin{remark}\label{rem:orbit-no-go}
Proposition~\ref{prop:stabilizer} is not needed for the E-group theorem.
Its purpose is to explain the finite proof.  It rules out the most natural
orbit compression: the tensor itself has no nontrivial projective linear
symmetry with which to identify different directions.  It does not rule
out a completely different, non-symmetry-based argument for simplicity of
the dual algebra.  For the present certificate, however, the projective
enumeration is already orbit-minimal for the natural tensor similitude
group.
\end{remark}

\section{Proof of the E-group theorem}\label{sec:main}

\begin{theorem}\label{thm:main}
The group $P$ is a finite E-group of nilpotency class three.  In
particular, Problem~11.46(a) of the Kourovka Notebook has an affirmative
answer.
\end{theorem}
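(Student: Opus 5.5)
Fix $\varphi\in\End(P)$ and let $L\in\End_{\F_3}(V)$ be the map it induces on $V=P/\Phi(P)$. By Lemma~\ref{lem:naturality}, $\im L$ is $q$-closed, so Theorem~\ref{thm:rigidity} leaves only two cases: $\im L=V$ or $\im L=0$. Nilpotency class three and order $3^{84}$ are already recorded in Proposition~\ref{prop:published}. It therefore remains to verify $[a,a^\varphi]=1$ for all $a\in P$ in each of the two cases.

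\emph{Case $\im L=V$.} Since $L$ is onto, $P^\varphi\Phi(P)=P$. By Burnside's basis theorem $P^\varphi=P$. Because $P$ is finite, $\varphi$ is then an automorphism. Proposition~\ref{prop:published} says that $P$ is an A-group, so $[a,a^\varphi]=1$ holds for all $a$.

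\emph{Case $L=0$.} Here $x_i^\varphi\in\Phi(P)=P'$ for every $i$, by \eqref{eq:phi}, and hence $P^\varphi\le P'$. Apply $\varphi$ to the $i$-th relation in \eqref{eq:relations}. The right-hand side becomes a product of commutators $[x_j^\varphi,x_k^\varphi]$ with both entries in $P'$. Since $P'$ is abelian (it is the direct product in \eqref{eq:published}), each such commutator is trivial. Therefore $(x_i^\varphi)^3=1$, so each $x_i^\varphi$ lies in $\Omega_1(P')$. By \eqref{eq:published}, $\Omega_1(P')=Z(P)$, which is a subgroup. Hence $P^\varphi=\langle x_1^\varphi,\dots,x_9^\varphi\rangle\le Z(P)$, and every $a^\varphi$ is central. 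This gives $[a,a^\varphi]=1$.

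The argument has two delicate points. First, the case split must cover all endomorphisms. It does, because Lemma~\ref{lem:naturality} applies to every $\varphi$ and Theorem~\ref{thm:rigidity} already carries the finite certificate, so no further computation is needed. Second, the step $L=0\Rightarrow P^\varphi\le Z(P)$ depends on two facts: the relations express each cube purely as a product of commutators, and $\Omega_1(P')$ is all of $Z(P)$, not merely contained in it. I expect this second step to be the main thing to check carefully. The point to watch is that the relations \eqref{eq:relations} hold exactly in $P$, not just modulo $P_2(P)$, so that applying $\varphi$ to them is legitimate. They do, since $P$ is defined by these relations. The affirmative answer to Problem~11.46(a) of the Kourovka Notebook then follows directly.
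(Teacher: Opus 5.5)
Your proof is correct and follows the paper's argument step for step. The naturality lemma and tensor rigidity give the dichotomy $\im L\in\{0,V\}$. The surjective case reduces to the A-group property via Burnside's basis theorem. In the case $L=0$, the fact that $P'$ is abelian kills the commutator sides of the cube relations, and $\Omega_1(P')=Z(P)$ finishes the argument.

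One small correction to your commentary: the argument uses only the inclusion $\Omega_1(P')\le Z(P)$, so ``contained in'' is exactly what matters, and the reverse inclusion plays no role.
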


\begin{proof}
Fix $\varphi\in\End(P)$ and let $L$ be its induced map on
$V=P/\Phi(P)$.  By Lemma~\ref{lem:naturality}, $\im L$ is $q$-closed.
Theorem~\ref{thm:rigidity} gives
\begin{equation}\label{eq:dichotomy}
                             \im L=V\quad\text{or}\quad\im L=0.
\end{equation}

Suppose first that $\im L=V$.  Then
$\varphi(P)\Phi(P)=P$.  Burnside's basis theorem implies
$\varphi(P)=P$, and a surjective endomorphism of a finite group is an
automorphism.  Proposition~\ref{prop:published} therefore gives
$[a,a^\varphi]=1$ for every $a\in P$.

Suppose now that $\im L=0$.  By \eqref{eq:phi},
\[
                         \varphi(P)\leq\Phi(P)=P'.
\]
The group $P'$ is abelian.  Apply $\varphi$ to each of the nine relations
in \eqref{eq:relations}.  Every commutator on a right-hand side becomes a
commutator of two elements of $P'$ and hence is trivial.  Therefore
\[
                         \varphi(x_i)^3=1
                         \qquad(1\leq i\leq9).
\]
Since each $\varphi(x_i)$ also lies in $P'$, the identity
$\Omega_1(P')=Z(P)$ from Proposition~\ref{prop:published} yields
$\varphi(x_i)\in Z(P)$ for every $i$.  The $x_i$ generate $P$, so
$\varphi(P)\leq Z(P)$.  Again $[a,a^\varphi]=1$ for every $a\in P$.

The two cases in \eqref{eq:dichotomy} cover all endomorphisms.  The order
and exact nilpotency class are part of Proposition~\ref{prop:published}.
\end{proof}

\begin{remark}\label{rem:notzero}
The second case does not say that an endomorphism inducing zero on
$P/\Phi(P)$ is the zero endomorphism.  It first allows an arbitrary image
inside $P'=Z_2(P)$ and then uses the cube relations to force that image
into the proper subgroup $Z(P)$.  Keeping these two steps separate is
essential in class three.
\end{remark}

\section{Algebraic meaning and scope}\label{sec:meaning}

The condition in Theorem~\ref{thm:rigidity} has a useful dual form.  Let
$A=V^*$ and define a bilinear anticommutative product by
\begin{equation}\label{eq:dualproduct}
 \langle \alpha\cdot\beta,v\rangle
   =\langle \alpha\wedge\beta,q(v)\rangle,
 \qquad \alpha,\beta\in V^*,\ v\in V.
\end{equation}
For $U\leq V$, write
$U^\perp=\{\alpha\in V^*: \alpha(U)=0\}$.

\begin{proposition}\label{prop:dual}
A subspace $U\leq V$ is $q$-closed if and only if $U^\perp$ is an ideal
of the anticommutative algebra $A$.  Consequently Theorem~\ref{thm:rigidity}
is equivalent to simplicity of $A$.
\end{proposition}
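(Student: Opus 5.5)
The plan is a direct duality computation on bases, followed by the observation that orthogonal complementation is an inclusion-reversing bijection between subspaces of $V$ and of $V^*$. Fix $U\leq V$ and put $I=U^\perp$. By definition, $I$ is an ideal of $A$ exactly when $\alpha\cdot\beta\in I$ for all $\alpha\in I$ and all $\beta\in A$. By \eqref{eq:dualproduct}, this says
\[
 \langle \alpha\wedge\beta,\,q(u)\rangle=0
 \qquad\text{for all }\alpha\in U^\perp,\ \beta\in V^*,\ u\in U .
\]
So the whole proposition reduces to one linear-algebra fact: for $\omega\in\Lambda^2V$, we have $\omega\in\Lambda^2U$ if and only if $\langle\alpha\wedge\beta,\omega\rangle=0$ for all $\alpha\in U^\perp$ and all $\beta\in V^*$. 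Applying this to $\omega=q(u)$ for each $u\in U$ gives the equivalence with Definition~\ref{def:closed}.

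To prove the fact, I would choose a basis $f_1,\dots,f_9$ of $V$ whose first $k$ vectors span $U$, and take the dual basis $f^1,\dots,f^9$. Then $U^\perp$ is spanned by $f^{k+1},\dots,f^9$. Write $\omega=\sum_{i<j}c_{ij}f_i\wedge f_j$, using the standard pairing $\langle f^a\wedge f^b,f_i\wedge f_j\rangle=\delta_{ai}\delta_{bj}-\delta_{aj}\delta_{bi}$. The pairings $\langle f^a\wedge f^b,\omega\rangle$ with $a>k$ and $b$ arbitrary recover, up to sign, exactly the coefficients $c_{ij}$ with $\max(i,j)>k$. These all vanish if and only if $\omega\in\Lambda^2U$. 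By bilinearity it suffices to test basis vectors $\alpha=f^a$ and $\beta=f^b$. This is the same content as Lemma~\ref{lem:support}, viewed through the support, and one could cite that lemma instead: $\langle\alpha\wedge\beta,\omega\rangle$ is, up to sign, $\beta(\omega(\alpha))$, where $\omega$ is regarded as the contraction map $V^*\to V$.

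For the consequence, I would use that $U\mapsto U^\perp$ is an inclusion-reversing bijection between subspaces of $V$ and subspaces of $A=V^*$, sending $0\mapsto A$ and $V\mapsto 0$. Hence $A$ has no ideals other than $0$ and $A$ exactly when the only $q$-closed subspaces are $V$ and $0$, which is Theorem~\ref{thm:rigidity}. One should also note that $A$ is not the zero algebra: $q\neq0$, so the product is nonzero, and then "simple" in the usual sense (nonzero product, no nontrivial ideals) matches.

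No step is a real obstacle. The only care needed is with the pairing convention between $\Lambda^2V^*$ and $\Lambda^2V$, and with confirming that one-sided ideal and two-sided ideal coincide by anticommutativity.
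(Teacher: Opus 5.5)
Your proposal is correct and takes essentially the same route as the paper. Both proofs translate the ideal condition through \eqref{eq:dualproduct} into $\langle\alpha\wedge\beta,q(u)\rangle=0$ for all $\alpha\in U^\perp$, $\beta\in V^*$, $u\in U$, identify this for each fixed $u$ with $q(u)\in\Lambda^2U$, and conclude using the inclusion-reversing bijection $U\mapsto U^\perp$, which swaps $0$ and $V$. The paper simply asserts the fixed-$u$ step, whereas you verify it with an adapted basis, and you add the correct side remarks that one- and two-sided ideals coincide and that $A$ has nonzero product.
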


\begin{proof}
The subspace $U^\perp$ is an ideal precisely when, for every
$\alpha\in U^\perp$, $\beta\in V^*$, and $u\in U$,
\[
 0=\langle\alpha\cdot\beta,u\rangle
   =\langle\alpha\wedge\beta,q(u)\rangle.
\]
For a fixed $u$, these equalities for all $\alpha\in U^\perp$ and
$\beta\in V^*$ are equivalent to $q(u)\in\Lambda^2U$.  Hence the ideal
condition is equivalent to $q(U)\subseteq\Lambda^2U$.  Orthogonal
complementation reverses $0$ and $V$, so the absence of nonzero proper
$q$-closed subspaces is exactly the absence of nonzero proper ideals in
$A$.
\end{proof}

This reformulation helps locate the rigidity in a broader pattern.  In
Caranti's module-theoretic treatment of class-two groups, power relations
are likewise encoded by a linear map involving $\Lambda^2V$
\cite[Section~3]{Caranti15}; the published erratum should be read together
with that paper for its later modified constructions
\cite{CarantiErratum}.  Caranti's subsequent construction gives examples
where the compatible endomorphisms are reduced to the trivial map and the
identity \cite[Section~4]{Caranti16}.  The present argument has a similar
linear silhouette but a different endpoint: the group is already fixed,
the tensor is the one forced by \eqref{eq:relations}, and a zero action on
the Frattini quotient still has to be lifted through the noncentral group
$P'=Z_2(P)$.

The conclusion therefore rests on two separate rigidity mechanisms.  The
first is linear: the relation tensor admits no nonzero proper closed
subspace, so a singular endomorphism cannot retain any nonzero direction
in $P/\Phi(P)$.  The second is genuinely group-theoretic: once the induced
map vanishes, the nine cube relations collapse the image from $P'$ to
$\Omega_1(P')=Z(P)$.  Neither statement alone proves the E-property.  Their
combination is what makes the class-three candidate work.

\appendix
\section{A self-contained exact reference verifier}\label{app:verifier}

This appendix makes the finite certificate in
Section~\ref{subsec:certificate} reproducible from the paper alone.  The
program below uses no external package.  Its list \texttt{R} is the
zero-based encoding of the four exterior pairs in each of the nine rows of
$q$ displayed in Section~\ref{sec:tensor}.  The routine \texttt{support}
computes the column space of the corresponding alternating matrix,
\texttt{closure} implements \eqref{eq:closure}, and \texttt{points}
enumerates exactly the first-nonzero-coordinate-one representatives of
$\PG(8,3)$.  All row reduction is performed over $\F_3$ by integer
arithmetic modulo three.

\begin{lstlisting}[language=Python,caption={Exact reference verifier for the relation tensor.},label={lst:verifier}]
from collections import Counter
from itertools import product

p, n = 3, 9
R = (
 ((1,2),(3,4),(5,6),(7,8)),
 ((0,2),(3,5),(4,7),(6,8)),
 ((0,1),(3,6),(4,8),(5,7)),
 ((0,4),(1,5),(2,8),(6,7)),
 ((0,3),(1,7),(2,6),(5,8)),
 ((0,6),(1,8),(2,4),(3,7)),
 ((0,7),(3,8),(2,5),(1,4)),
 ((0,8),(2,3),(1,6),(4,5)),
 ((0,5),(2,7),(1,3),(4,6)),
)

def rref(rows):
    A = [[x % p for x in row] for row in rows
         if any(x % p for x in row)]
    r = 0
    for c in range(n):
        s = next((i for i in range(r, len(A)) if A[i][c]), None)
        if s is None:
            continue
        A[r], A[s] = A[s], A[r]
        if A[r][c] == 2:
            A[r] = [(2*x) % p for x in A[r]]
        for i in range(len(A)):
            if i != r and A[i][c]:
                a = A[i][c]
                A[i] = [(x-a*y) % p for x,y in zip(A[i],A[r])]
        r += 1
    return A[:r]

def qmat(v, rel=R):
    Q = [[0]*n for _ in range(n)]
    for a, pairs in enumerate(rel):
        for i,j in pairs:
            Q[i][j] = (Q[i][j] + v[a]) % p
            Q[j][i] = (Q[j][i] - v[a]) % p
    return Q

def support(v, rel=R):
    Q = qmat(v, rel)
    return rref([[Q[i][j] for i in range(n)] for j in range(n)])

def closure(v, rel=R):
    B = rref([v])
    dims = [len(B)]
    while True:
        C = rref(B + [w for u in B for w in support(u, rel)])
        if len(C) == len(B):
            return B, tuple(dims)
        B = C
        dims.append(len(B))

def points():
    for v in product(range(p), repeat=n):
        if v == (0,)*n:
            continue
        if next(x for x in v if x) == 1:
            yield v

pts = list(points())
assert len(pts) == (p**n-1)//(p-1) == 9841
assert {e for row in R for e in row} == {
    (i,j) for i in range(n) for j in range(i+1,n)}

ranks, closes, profiles = Counter(), Counter(), Counter()
for v in pts:
    ranks[len(support(v))] += 1
    B, profile = closure(v)
    closes[len(B)] += 1
    profiles[profile] += 1

assert ranks == Counter({4:2, 6:478, 8:9361})
assert closes == Counter({9:9841})
assert profiles == Counter({
    (1,9):6138, (1,8,9):3223, (1,7,9):426,
    (1,6,9):52, (1,5,9):1, (1,4,9):1})

bad = list(R)
bad[0] = ()
assert len(closure((1,0,0,0,0,0,0,0,0), bad)[0]) == 1

print("9841 projective points; rank profile 4^2, 6^478, 8^9361")
print("all closures have dimension 9; negative control detected")
\end{lstlisting}

The output is
\begin{verbatim}
9841 projective points; rank profile 4^2, 6^478, 8^9361
all closures have dimension 9; negative control detected
\end{verbatim}
The assertions implement the certificate: a failed pair partition, projective
count, rank distribution, closure count, growth profile, or negative
control stops execution.  In particular, the mathematical input, the
exhaustion rule, and an executable implementation all appear in the PDF.

\section*{Statement on computational and writing assistance}
The TARS agent system assisted with exploratory derivations.  The exact
finite verification in Section~\ref{subsec:certificate} was checked with
independent Python and C++ implementations over $\F_3$; the compact
Python verifier used for the final certificate is reproduced in
Appendix~\ref{app:verifier}.  The authors are responsible for the
mathematical statements, literature attributions, and final text.

\end{document}